\newtheorem{theorem}{Theorem}
\newtheorem*{theorem*}{Theorem}
\newtheorem{corollary}{Corollary}
\newtheorem{proposition}{Proposition}
\newtheorem*{theoremA}{Theorem A}
\theoremstyle{definition}
\newtheorem*{definition*}{\sc Definition}
\newtheorem{remark}{Remark}
\newtheorem*{example*}{\bf Example}
\newcommand{\loc}{{\rm loc}}
\newcommand{\Real}{{\rm Re}}
\newcommand{\sprt}{{\rm sprt\,}}
\newcommand{\clos}{{\rm clos}}
\newcommand{\dotminus}{\mathbin{\text{\@dotminus}}}
\newcommand{\@dotminus}{%
  \ooalign{\hidewidth\raise1ex\hbox{.}\hidewidth\cr$\m@th-$\cr}%
}
\begin{document}

\title[Fundamental solution to fractional Schr\"odinger operator]{Two-sided weighted bounds on fundamental solution to fractional Schr\"odinger operator}

\author{D.\,Kinzebulatov}

\address{Universit\'{e} Laval, D\'{e}partement de math\'{e}matiques et de statistique, 1045 av.\,de la M\'{e}decine, Qu\'{e}bec, QC, G1V 0A6, Canada}

\email{damir.kinzebulatov@mat.ulaval.ca}

\author{Yu.\,A.\,Sem\"{e}nov}

\address{University of Toronto, Department of Mathematics, 40 St.\,George Str, Toronto, ON, M5S 2E4, Canada}

\email{semenov.yu.a@gmail.com}

\subjclass[2010]{35K08, 47D07 (primary), 60J35 (secondary)}

\keywords{Non-local operators, heat kernel estimates, desingularization}

\begin{abstract}
We establish sharp two-sided weighted bounds on the fundamental solution to the fractional Schr\"{o}dinger operator using the method of desingularizing weights.
\end{abstract}

\maketitle
 
In \cite{MS0}, Milman and Sem\"{e}nov developed an approach to study of the integral kernels of semigroups which are not necessarily ultracontractive by transferring them to appropriately chosen weighted spaces
where they become ultracontractive \cite{MS1,MS}. In the special case of the Schr\"{o}dinger semigroup generated by $-\Delta - V$, with potential $V(x)=\delta\frac{(d-2)^2}{4}|x|^{-2}$, $0<\delta \leq 1$, $d \geq 3$, having a critical-order singularity at $x=0$ (which makes invalid the standard two-sided Gaussian bounds on its integral kernel) this method yields sharp two-sided \textit{weighted} bounds on the integral kernel.

In \cite{KiS2}, we employed the method of desingularizing weights to establish sharp two-sided weighted bounds on the fundamental solution to the non-local operator
$$
(-\Delta)^{\frac{\alpha}{2}} + b \cdot \nabla, \quad b(x)=\delta(d-\alpha)^{-2} 2 c^{-2}_\alpha|x|^{-\alpha}x, \quad 0<\delta<1, \quad 1<\alpha<2, \quad d \geq 3,
$$
where $c_\alpha:=c\left(\frac{\alpha}{2},2,d\right)$,  
\[
c(\alpha,p,d):=\frac{\gamma(\frac{d}{p}-\alpha)}{\gamma(\frac{d}{p})}, \quad \gamma(\alpha):=\frac{2^\alpha\pi^\frac{d}{2}\Gamma(\frac{\alpha}{2})}{\Gamma(\frac{d}{2}-\frac{\alpha}{2})}, \quad 1<p<\frac{d}{\alpha}.
\]
In this paper, we specify our arguments in \cite{KiS2} to the operator
$$
(-\Delta)^{\frac{\alpha}{2}} - V, \quad
V(x)=\delta c_\alpha^{-2}|x|^{-\alpha},\quad 0<\delta \leq 1, \quad 0<\alpha<2,$$
and obtain sharp two-sided weighted bounds on its fundamental solution.
These bounds are known, see \cite{BGJP}, where the authors use a different technique.
Concerning $(-\Delta)^{\frac{\alpha}{2}} + c|x|^{-\alpha}$, $c>0$, see \cite{CKSV} and \cite{JW}.

\medskip

\textbf{1.~}The method of desingularizing weights relies on two assumptions: the Sobolev embedding property, and a ``desingularizing''
$(L^1,L^1)$ bound on the weighted semigroup.
Let $X$ be a locally compact topological space and $\mu$ a $\sigma$-finite Borel measure on $X$. Let $\Lambda$ be a non-negative selfadjoint operator in the (complex) Hilbert space $L^2=L^2(X,\mu)$ with the inner product $\langle f,g\rangle =\int_X f\bar{g} d \mu$. We assume that $\Lambda$ possesses the Sobolev embedding property:

There are constants $j>1$ and $c_S>0$ such that, for all $f\in D(\Lambda^\frac{1}{2})$,
\[
c_S \|f \|^2_{2j} \leq \|\Lambda^\frac{1}{2}f\|_2^2 \tag{$M_1$}
\]
but $e^{-t\Lambda}\upharpoonright L^1\cap L^2$, $t>0$, cannot be extended by continuity to a bounded map on $L^1$ and the ultracontractive estimate
\[
\|e^{-t\Lambda}f\|_\infty \leq c_t \|f\|_1, \;\; f\in L^1\cap L^\infty, \; t>0
\]
is not valid.

In this case we will be assuming that there exists a family of real valued weights $\varphi=\{\varphi_s\}_{s>0}$ on $X$ such that, for all $s>0$,
\[
\varphi_s, \; 1/\varphi_s \in L^2_\loc(X,\mu) \tag{$M_2$}
\] 
and there exists a constant $c_1$ independent of $s$ such that, for all $0<t\leq s$,
\[
\|\varphi_s e^{-t\Lambda}\varphi_s^{-1} f\|_1 \leq c_1 \|f\|_1, \;\; f\in \mathcal
D:=\varphi_s L^\infty_{com} (X,\mu). \tag{$M_3$}
\]

\begin{theoremA}[\cite{MS}]
\label{weightM1}

In addition to $(M_1)-(M_3)$ assume that
\[
\inf_{s>0, x\in X} |\varphi_s(x)| \geq c_0 >0. \tag{$M_4$}
\]
Then $e^{-tA}, t>0$ are integral operators, and there is a constant $C=C(j,c_s,c_1,c_0)$ such that, for all $t>0$ and $\mu$ a.e. $x,y \in X$,
\[
|e^{-t\Lambda}(x,y)|\leq Ct^{-j^\prime} |\varphi_t(x) \varphi_t(y)|, \;\; j^\prime=j/(j-1). 
\label{nie}
\tag{$NIE_w$}
\]
\end{theoremA}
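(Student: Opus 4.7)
My plan is to reduce the pointwise kernel estimate to a weighted $L^1\to L^2$ bound for $e^{-t\Lambda/2}$ and to establish that bound via a Nash-type ODE argument driven by $(M_1)$, $(M_3)$, and $(M_4)$. The starting observation is that, by the semigroup property and the selfadjointness of $e^{-t\Lambda}$, one has the factorization
$$M_{\varphi_t^{-1}}\,e^{-t\Lambda}\,M_{\varphi_t^{-1}} = P^{*}P, \qquad P := e^{-t\Lambda/2}\,M_{\varphi_t^{-1}},$$
where $M_\psi$ denotes multiplication by $\psi$. A Dunford--Pettis identification shows that the claimed estimate $|e^{-t\Lambda}(x,y)|\le C t^{-j'}|\varphi_t(x)\varphi_t(y)|$ is equivalent to $\|P^{*}P\|_{L^1\to L^\infty}\le C t^{-j'}$, and the standard duality $\|P^{*}\|_{L^2\to L^\infty}=\|P\|_{L^1\to L^2}$ reduces the whole problem to the single inequality $\|P\|_{L^1\to L^2}\le C^{1/2}t^{-j'/2}$.

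To establish that inequality I would first extract from $(M_1)$, using H\"older interpolation with exponent $\theta = j/(2j-1)$ (chosen so that $\theta/(1-\theta)=j'$), the Nash inequality
$$\|h\|_{2}^{2/\theta} \le c_S^{-1}\,\langle\Lambda h,h\rangle\,\|h\|_{1}^{2(1-\theta)/\theta}.$$
Now fix $t>0$, pick $g\in\varphi_t^{-1}L^\infty_{com}$, and set $u(\tau):=e^{-\tau\Lambda}g$. Applying $(M_3)$ with $s=t$ to $\varphi_t g\in\varphi_t L^\infty_{com}$ gives $\|\varphi_t u(\tau)\|_1\le c_1\|\varphi_t g\|_1$ for $0<\tau\le t$, and then $(M_4)$ converts this into the crucial \emph{unweighted} control $\|u(\tau)\|_1\le c_0^{-1}c_1\|\varphi_t g\|_1$. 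Feeding the Nash inequality into the energy identity $\frac{d}{d\tau}\|u\|_2^2=-2\langle\Lambda u,u\rangle$ and inserting this uniform $L^1$ bound produces the differential inequality
$$\frac{d}{d\tau}\|u(\tau)\|_{2}^{2} \le -C_1\,\|u(\tau)\|_{2}^{2/\theta}\,\|\varphi_t g\|_{1}^{-2(1-\theta)/\theta},$$
whose standard integration from $0$ to $t/2$ yields $\|u(t/2)\|_2 \le C_2\,t^{-j'/2}\|\varphi_t g\|_1$. Substituting $g=\varphi_t^{-1}f$ reads exactly as $\|Pf\|_2\le C_2 t^{-j'/2}\|f\|_1$.

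The density of $\varphi_t^{-1}L^\infty_{com}$ in $L^1$, guaranteed by $(M_2)$, extends the bound to all of $L^1$; the factorization then delivers the $L^1\to L^\infty$ bound on $P^{*}P$, and the Dunford--Pettis step converts it back to the pointwise kernel estimate. The step I expect to require the most care is the Nash ODE itself: one must verify that $u(\tau)\in D(\Lambda^{1/2})$ for a.e.\ $\tau>0$, so that both $(M_1)$ and the energy identity apply, and justify that all manipulations stay inside a dense test class. Conceptually, the key tension in the argument is at the interface between $(M_3)$ and the unweighted $L^1$ input demanded by Nash: it is precisely here that $(M_4)$ is indispensable, since without a positive lower bound on $\varphi_s$ the weighted $L^1$ control from $(M_3)$ cannot be translated into the ordinary $L^1$ control that closes the ODE argument.
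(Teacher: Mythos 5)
Your proposal is correct and follows essentially the same route as the paper's proof in Appendix A: a Nash-type differential inequality driven by $(M_1)$, with the uniform $L^1$ input supplied by combining $(M_3)$ and $(M_4)$, integrated to an $L^1\to L^2$ smoothing bound with weight $\varphi_t$, then converted to the pointwise kernel estimate by duality and Dunford--Pettis. The only presentational difference is that the paper conjugates by $\varphi$ and runs the ODE argument in the weighted space $L^2_\varphi$ for the operator $\Lambda_\varphi=\Phi^{-1}\Lambda\Phi$, whereas you stay in the unweighted $L^2$ and track $u(\tau)=e^{-\tau\Lambda}\varphi_t^{-1}h$ directly; the two formulations are unitarily equivalent and yield identical estimates.
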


For the sake of completeness, we recall the proof of Theorem A in Appendix \ref{A}.

In applications of Theorem A to concrete operators the main difficulty consists in
verification of the $(L^1,L^1)$ bound ($M_3$). 
In \cite{MS}, ($M_3$) is proved  for the Schr\"{o}dinger operator
by means of the theory of $m$-sectorial operators and the Stampacchia criterion in $L^2$. 
However, attempts to apply that argument to $(-\Delta)^{\frac{\alpha}{2}}$, $\alpha<2$,  are quite problematic since $(-\Delta)^{\frac{\alpha}{2}}$ lacks the local properties of $-\Delta$. 
In \cite{KiS2}, we developed a new approach to the proof of ($M_3$) by means of the Lumer-Phillips Theorem applied to specially constructed $C_0$ semigroups in $L^1$ which approximate $\varphi_s e^{-t\Lambda}\varphi_s^{-1}$. 
Thus, in contrast to \cite{MS}, where the $(L^1,L^1)$ bound is proved using the $L^2$ theory, here we stay within the $L^1$ theory.
For $\alpha=2$, the approximation semigroups are constructed by replacing $|x|$ by $|x|_\varepsilon=\sqrt{|x|^2+\varepsilon}$, $\varepsilon>0$, both in the potential and in the weights, see below. For $\alpha<2$, the construction of the approximation semigroups is more subtle, and is a key observation.

\medskip

\textbf{2.~}We now state our result on $(-\Delta)^{\frac{\alpha}{2}}-V$ in detail. First, we consider the case $0<\delta<1$. The case $\delta=1$ requires few modifications. We will attend to it in the end.

According to the Hardy-Rellich inequality 
$\| (-\Delta)^{\frac{\alpha}{4}}f\|_2^2 \geq c^{-2}(\frac{\alpha}{2},2,d) \||x|^{-\frac{\alpha}{2}}f\|_2^2$ (see \cite[Lemma 2.7]{KPS}) 
the form difference $\Lambda=(-\Delta)^\frac{\alpha}{2}\dotminus V$ is well defined \cite[Ch.VI, sect 2.5]{Ka}.

Define $\beta$ by $\delta c_\alpha^{-2}=\frac{\gamma(\beta)}{\gamma(\beta-\alpha)}$, and let $\varphi(x)\equiv \varphi_s(x)=\eta(s^{-\frac{1}{\alpha}}|x|)$, where $\eta \in C^2(\mathbb R - \{0\})$ is such that
\[
\eta(r)=\left \{
\begin{array}{ll}
r^{-d+\beta}, & 0<r< 1, \\
\frac{1}{2}, & r\geq 2.
\end{array}
\right.
\]

\begin{theorem}
\label{thm1} 
Under constraints $0<\delta<1$ and $0<\alpha<2$, $e^{-t\Lambda}$ is an integral operator for each $t>0$. The weighted Nash initial estimate 
\[
e^{-t\Lambda}(x,y)\leq c t^{-\frac{d}{\alpha}}\varphi_t(x)\varphi_t(y),\quad c=c_{d,\delta,\alpha},
\]
is valid for all $t>0$, $x,y\in \mathbb R^d-\{0\}$.
\end{theorem}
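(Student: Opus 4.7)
The plan is to verify the four hypotheses $(M_1)$--$(M_4)$ of Theorem~A for $\Lambda$ with the given weights, and then read off the bound with $j'=d/\alpha$.

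Conditions $(M_1)$, $(M_2)$, $(M_4)$ are direct. The Hardy-Rellich inequality recalled in the excerpt gives $\langle\Lambda f,f\rangle\geq(1-\delta)\|(-\Delta)^{\alpha/4}f\|_2^2$, and combined with the standard fractional Sobolev inequality $\|f\|_{2d/(d-\alpha)}^2\leq C_d\|(-\Delta)^{\alpha/4}f\|_2^2$ this gives $(M_1)$ with $j=d/(d-\alpha)$. The parameter $\beta$ is chosen so that the pointwise identity $(-\Delta)^{\alpha/2}|x|^{-d+\beta}=\frac{\gamma(\beta)}{\gamma(\beta-\alpha)}|x|^{-d+\beta-\alpha}$ (valid for $\alpha<\beta<d$), together with the defining equation $\delta c_\alpha^{-2}=\gamma(\beta)/\gamma(\beta-\alpha)$, yields the Hardy-type eigenvalue relation $(-\Delta)^{\alpha/2}|x|^{-d+\beta}=V(x)|x|^{-d+\beta}$; since $0<\delta<1$ a root $\beta\in(\alpha,d)$ exists, and then $\varphi_s\geq 1/2$ pointwise, so $(M_4)$ holds with $c_0=1/2$ and $(M_2)$ is trivial.

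The heart of the argument is the desingularizing bound $(M_3)$. Following the $L^1$-semigroup strategy of \cite{KiS2}, one writes the formal generator of the weighted semigroup,
\[
L_\varphi f = \varphi_s(-\Delta)^{\alpha/2}(\varphi_s^{-1}f) - Vf, \qquad f\in\mathcal D,
\]
and aims for $L^1$-quasi-accretivity with an $s$-independent constant. The central computation uses the pointwise Kato inequality $\sgn(g)(-\Delta)^{\alpha/2}g\geq(-\Delta)^{\alpha/2}|g|$ (immediate from the L\'evy integral representation) applied to $g=\varphi_s^{-1}f$, together with self-adjointness of $(-\Delta)^{\alpha/2}$, to produce
\[
\int_{\mathbb R^d}\sgn(f)\,L_\varphi f\,dx \;\geq\; \int_{\mathbb R^d}|f|\bigl[\varphi_s^{-1}(-\Delta)^{\alpha/2}\varphi_s - V\bigr]dx.
\]
Writing $\varphi_s(x)=s^{(d-\beta)/\alpha}|x|^{-d+\beta}+\psi_s(x)$, the Hardy cancellation $(-\Delta)^{\alpha/2}|x|^{-d+\beta}=V|x|^{-d+\beta}$ reduces the bracket to $\varphi_s^{-1}(-\Delta)^{\alpha/2}\psi_s - V\psi_s/\varphi_s$. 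Since $\psi_s$ is the dilation by $s^{-1/\alpha}$ of a bounded function vanishing on the unit ball, the scaling of $(-\Delta)^{\alpha/2}$ gives $\|(-\Delta)^{\alpha/2}\psi_s\|_\infty=O(s^{-1})$; together with $\varphi_s^{-1}$ bounded and $V(x)\leq c_\alpha^{-2}s^{-1}$ on $\{|x|\geq s^{1/\alpha}\}$, the bracket is an $L^\infty$ function of norm $O(s^{-1})$. Hence $L_\varphi$ is a bounded perturbation (of order $s^{-1}$) of an accretive operator, and Lumer-Phillips yields $\|e^{-tL_\varphi}\|_{1\to 1}\leq e^{Ct/s}\leq e^C$ for $0<t\leq s$, i.e.\ $(M_3)$.

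The above computation is formal because $L_\varphi$ is not a bounded operator on $L^1$, and making it rigorous is the principal obstacle. As the authors emphasize, the non-locality of $(-\Delta)^{\alpha/2}$ rules out the $\alpha=2$ device of substituting $|x|\mapsto\sqrt{|x|^2+\varepsilon}$; one must approximate the operator itself. The natural choice is to truncate the L\'evy kernel of $(-\Delta)^{\alpha/2}$ to $\{|y-x|>\varepsilon\}$, which makes the generator bounded on $L^1$, and simultaneously regularize the weights and the potential in such a way that (a) the pointwise Kato inequality, (b) the Hardy cancellation up to vanishing error, and (c) the strong convergence on $\mathcal D$ of the approximating semigroups to $\varphi_s e^{-t\Lambda}\varphi_s^{-1}$ are all preserved. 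Designing such an approximation is the subtle ``key observation'' referred to in the introduction. Once $(M_3)$ is established, Theorem~A produces $e^{-t\Lambda}(x,y)\leq Ct^{-d/\alpha}\varphi_t(x)\varphi_t(y)$ for a.e.\ $x,y$, completing the proof.
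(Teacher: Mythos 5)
Your verification of $(M_1)$, $(M_2)$, $(M_4)$ matches the paper, and your formal computation for $(M_3)$ — the Kato inequality reducing the matter to a lower bound on $\varphi_s^{-1}(-\Delta)^{\alpha/2}\varphi_s-V$, the Hardy cancellation on the homogeneous part $|x|^{-d+\beta}$, and the $O(s^{-1})$ scaling bound on the remainder — is essentially the same computation the paper carries out (there in the semigroup form $\Real\langle Qf,\tfrac{f}{|f|}\rangle\geq\langle\Lambda^\varepsilon e^{-\Lambda^\varepsilon/n}|u|,\varphi\rangle=\langle e^{-\Lambda^\varepsilon/n}|u|,\Lambda^\varepsilon\varphi\rangle$, a rigorous stand-in for pointwise Kato).

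The genuine gap is in the rigorization of $(M_3)$, which you yourself flag as the ``subtle key observation'' and then do not supply, proposing instead to truncate the L\'evy kernel of $(-\Delta)^{\alpha/2}$ on $\{|y-x|>\varepsilon\}$. That is not what the paper does, and it is not clear it could be made to work cleanly: truncating the kernel destroys the exact Hardy identity $(-\Delta)^{\alpha/2}|x|^{-d+\beta}=V|x|^{-d+\beta}$ on which your reduction hinges, and it also changes the semigroup whose weighted contraction you ultimately want. The paper's construction keeps $(-\Delta)^{\alpha/2}$ intact, regularizes only the potential, $V_\varepsilon(x)=\delta c_\alpha^{-2}|x|_\varepsilon^{-\alpha}$ with $|x|_\varepsilon=\sqrt{|x|^2+\varepsilon}$ (as in the $\alpha=2$ case), and — this is the key new step for $\alpha<2$ — regularizes the \emph{weight} by the semigroup itself: $\phi_n:=e^{-\Lambda^\varepsilon/n}\varphi$, with $\Lambda^\varepsilon=(-\Delta)^{\alpha/2}-V_\varepsilon$ acting in $L^1$. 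Then $Q=\phi_n\Lambda^\varepsilon\phi_n^{-1}$ is set up on a dense core $M=\phi_n(\lambda_\varepsilon+\Lambda^\varepsilon)^{-1}[L^1\cap C_u]$; one shows $\tilde Q=(Q\!\upharpoonright\! M)^{\clos}$ has dense range for $\lambda_\varepsilon+\tilde Q$, proves quasi-accretivity of $\tilde Q$ with constant $\hat c s^{-1}$ (Proposition~\ref{prop_main}, which is where your Hardy-cancellation/scaling estimate actually lives), and then applies Lumer--Phillips to get $\|\phi_n e^{-t\Lambda^\varepsilon}\phi_n^{-1}\|_{1\to1}\leq e^{\hat c t/s}$, from which $(\star\star)$ and $(\bullet)$ follow by $n\to\infty$, $\varepsilon\downarrow 0$ and monotone convergence. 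The point of $\phi_n=e^{-\Lambda^\varepsilon/n}\varphi$ is precisely that it allows the passage from the formal Kato step to a true accretivity inequality (holomorphy of $e^{-t\Lambda^\varepsilon}$, identification of $\tilde Q$ with the generator $G$), and that it recovers $\varphi$ in the limit $n\to\infty$; your proposal neither contains this construction nor shows that the L\'evy-truncation alternative preserves the estimates.

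A smaller point: once $(M_1)$--$(M_4)$ are in hand, Theorem~A produces the estimate for $0<t\leq s$; the clean statement for all $t>0$ in Theorem~\ref{thm1} uses that one may take $s=t$ (the constant in $(\bullet)$ is $e^{c t/s}$, uniform for $t\leq s$), which is worth making explicit.
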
 

\begin{proof}[Proof of Theorem \ref{thm1}]
We verify the assumptions of Theorem A:

($M_1$) follows from the Hardy-Rellich inequality and the uniform Sobolev inequality $\| (-\Delta)^{\frac{\alpha}{4}}f\|^2_2 \geq c_S\|f\|_{2j}^2$, $j=\frac{d}{d-\alpha}$.

($M_2$), ($M_4$) are immediate from the definition of $\varphi_s$.

($M_3$) Our goal is to prove the following $(L^1,L^1)$ bound:
\[
\|\varphi e^{-t\Lambda}\varphi^{-1}h\|_1\leq e^{c\frac{t}{s}}\|h\|_1, \quad h\in L^1\cap L^2, \;\;t>0. \tag{$\bullet$}
\label{M3}
\]

\textit{Proof of \eqref{M3}.~}In $L^1$ define operator $\Lambda^\varepsilon= (-\Delta)^\frac{\alpha}{2}-V_\varepsilon$, $V_\varepsilon(x)=\delta c_\alpha^{-2}|x|_\varepsilon^{-\alpha}$, $\varepsilon>0$, $D(\Lambda^\varepsilon)=D((-\Delta)^\frac{\alpha}{2})$, 
\[
Q=\phi_n \Lambda^\varepsilon \phi_n^{-1}, \quad D(Q)=\phi_n D(\Lambda^\varepsilon), \quad F_{\varepsilon,n}^t=\phi_n e^{-t\Lambda^\varepsilon}\phi_n^{-1},\quad \phi_n(x)=e^{-\frac{\Lambda^\varepsilon}{n}}\varphi(x),\quad n=1,2,\dots
\]
Here $\phi_n D(\Lambda^\varepsilon):=\{\phi_n u\mid u\in D(\Lambda^\varepsilon)\}$. We also note that $e^{-t(-\Delta)^\frac{\alpha}{2}}$, $e^{-t\Lambda^\varepsilon}:\mathcal M\to \mathcal M$ where $\mathcal M=C_u$ or $\mathcal M=L^1$; and $\varphi=\varphi^{(1)}+\varphi^{(u)}$, $\varphi^{(1)}\in D((-\Delta)^\frac{\alpha}{2}_{L^1})$, $\varphi^{(u)}\in D((-\Delta)^\frac{\alpha}{2}_{C_u})$. $C_u\equiv C_u(\mathbb R^d)$ stands for the Banach space of uniformly continuous functions endowed with the supremum norm.

Since $\phi_n, \phi_n^{-1}\in L^\infty$, the operators $Q$, $F_{\varepsilon,n}^t$ are well defined.
 
1.~Clearly, $F^t_{\varepsilon,n}$ is a quasi bounded $C_0$ semigroup in $L^1$, say $e^{-tG}$. Set 
\[
\quad M:= \phi_n(1+(-\Delta)^\frac{\alpha}{2})^{-1}[L^1\cap C_u]=\phi_n(\lambda_\varepsilon+\Lambda^\varepsilon)^{-1}[L^1\cap C_u], \;\; 0<\lambda_\varepsilon\in \rho(-\Lambda^\varepsilon).
\]
Clearly, $M$ is a dense subspace of $L^1$, $M\subset D(Q)$ and $M\subset D(G)$. Moreover, $Q\upharpoonright M\subset G$. Indeed, for $f=\phi_n u\in M$,
\[
Gf=s\mbox{-}L^1\mbox{-}\lim_{t\downarrow 0}t^{-1}(1-e^{-tG})f=\phi_n s\mbox{-}L^1\mbox{-}\lim_{t\downarrow 0}t^{-1}(1-e^{-t\Lambda^\varepsilon})u=\phi_n\Lambda^\varepsilon u=Qf.
\]
Thus $Q\upharpoonright M$ is closable and $\tilde{Q}:=(Q\upharpoonright M)^{\clos}\subset G$.

Next, let us show that $R(\lambda_\varepsilon +\tilde{Q})$ is dense in $L^1$.
If $\langle(\lambda_\varepsilon +\tilde{Q})h,v\rangle=0$ for all $h\in D(\tilde{Q})$ and some $v\in L^\infty$, $\|v\|_\infty=1$, then taking $h\in M$ we would have $\langle(\lambda_\varepsilon+Q)\phi_n (\lambda_\varepsilon+\Lambda^\varepsilon)^{-1}g,v\rangle=0$, $g\in L^1\cap C_u$, or $\langle\phi_n g,v\rangle=0$. Choosing $g=e^\frac{\Delta}{k}(\chi_m v)$, where $\chi_m\in C^\infty_c$ with $\chi_m(x)=1$ when $x\in B(0,m)$, we would have $\lim_{k\uparrow\infty}\langle \phi_n g,v\rangle=\langle\phi_n\chi_m,|v|^2\rangle=0$, and so $v\equiv 0$. Thus, $R(\lambda_\varepsilon +\tilde{Q})$ is dense in $L^1$. 

2.~The main step:

\begin{proposition}
\label{prop_main}
There is a constant $\hat{c}=\hat{c}(d,\alpha,\delta)$ such that
 \[
  \lambda+\tilde{Q} \textit{ is accretive whenever } \lambda\geq \hat{c} s^{-1}.
 \] 
\end{proposition}

Taking Proposition \ref{prop_main} for granted we immediately establish the bound
 \[
 \|e^{-tG}\|_{1\to 1}\equiv\|\phi_ne^{-t\Lambda^\varepsilon}\phi_n^{-1}\|_{1\to 1}\leq e^{\omega t} , \quad \omega=\hat{c} s^{-1}. \tag{$\star$}
 \]
Indeed, the facts: $\tilde{Q}$ is closed and $R(\lambda_\varepsilon +\tilde{Q})$ is dense in $L^1$ \textit{together with} Proposition \ref{prop_main} imply $R(\lambda_\varepsilon +\tilde{Q})=L^1$. But then, by the Lumer-Phillips Theorem, $\lambda+\tilde{Q}$ is the (minus) generator of a contraction $C_0$ semigroup, and $\tilde{Q}=G$ due to $\tilde{Q}\subset G$. Incidentally, $M$ is a core of $G$.
 
In turn,  $(\star)$ easily yields
\[
 \|\varphi e^{-t\Lambda^\varepsilon}\varphi^{-1}h\|_1\leq e^{\omega t}\|h\|_1, \quad h\in L^1\cap L^2. \tag{$\star\star$}
\]
Indeed, $(\star)$ implies that $\lim_{n\uparrow \infty}\|\phi_n e^{-t\Lambda^\varepsilon}v\|_1\leq e^{\omega t}\lim_{n\uparrow \infty}\|\phi_n v\|_1$ for all $v\in L^1\cap L^2$. But
\[
\lim_{n\uparrow \infty}\|\phi_n v\|_1= \lim_{n\uparrow \infty}\langle \varphi,e^{-\frac{\Lambda^\varepsilon}{n}}|v|\rangle=\langle \varphi,|v|\rangle<\infty,
\]
\[
\lim_{n\uparrow \infty}\|\phi_n e^{-t\Lambda^\varepsilon}v\|_1 = \lim_{n\uparrow \infty}\langle \varphi,e^{-\frac{\Lambda^\varepsilon}{n}}|e^{-t\Lambda^\varepsilon} v|\rangle=\langle \varphi,|e^{-t\Lambda^\varepsilon}v|\rangle<\infty.
\]

Therefore, taking $v=\varphi^{-1}h$ we arrive at $(\star\star)$. Finally, it is seen that $\varphi e^{-t\Lambda^\varepsilon}\varphi^{-1}$ preserves positivity, so $(\bullet)$ follows from $(\star\star)$ by noticing that $e^{-t\Lambda^\varepsilon}|g|\uparrow e^{-t\Lambda}|g|$ $\mathcal L^d$ a.e.

\medskip

Let us write down a simple consequence of $(\star\star)$:

\begin{corollary}
\label{AHcorol1} For all $t>0$, $x\in\mathbb R^d-\{0\}$ and all small $\varepsilon>0$, there is a constant $\hat{c}$, such that
\[
e^{-t\Lambda^\varepsilon}\varphi_t\leq e^{\hat{c}}\varphi_t \text{ and } \langle e^{-t\Lambda^\varepsilon}(x,\cdot)\rangle\leq 2e^{\hat{c}}\varphi_t(x).
\]
\end{corollary}

\begin{proof}[Proof of Proposition \ref{prop_main}] First we note that, for $f=\phi_n u\in M$,
\begin{align*}
\langle Qf,\frac{f}{|f|}\rangle=&\langle \phi_n\Lambda^\varepsilon u,\frac{f}{|f|}\rangle=\lim_{t\downarrow 0}t^{-1}\langle\phi_n(1-e^{-t\Lambda^\varepsilon})u,\frac{f}{|f|}\rangle,\\
\Real\langle Qf,\frac{f}{|f|}\rangle &\geq\lim_{t\downarrow 0}t^{-1}\langle(1-e^{-t\Lambda^\varepsilon})|u|,\phi_n\rangle\\
&=\langle \Lambda^\varepsilon e^{-\frac{\Lambda^\varepsilon}{n}}|u|,\varphi\rangle.
\end{align*}
Let us emphasize that $e^{-t\Lambda^\varepsilon}$ is a holomorphic semigroup due to the Hille Perturbation Theorem (see e.g. \cite[Ch.\,IX, sect.\,2.2]{Ka}).

We are going to estimate $J:=\langle \Lambda^\varepsilon e^{-\frac{\Lambda^\varepsilon}{n}}|u|,\varphi\rangle$ ($=\langle  e^{-\frac{\Lambda^\varepsilon}{2n}}|u|,\Lambda^\varepsilon e^{-\frac{\Lambda^\varepsilon}{2n}}\varphi\rangle$)
from below using the equality
\[
(-\Delta)^\frac{\alpha}{2}\varphi=-I_{2-\alpha}\Delta\varphi, 
\]
where $I_\nu \equiv(-\Delta)^{-\frac{\nu}{2}}$.

Since $e^{-t\Lambda^\varepsilon}$ is a $C_0$ semigroup in $L^1$ and $C_{u}$, and $\varphi=\varphi_{(1)} + \varphi_{(u)}$,  $\varphi_{(1)} \in D((-\Delta)^{\frac{\alpha}{2}}_1)$, $\varphi_{(u)} \in D((-\Delta)^{\frac{\alpha}{2}}_{C_{u}})$,
$\Lambda^\varepsilon\varphi$ is well defined and belongs to $L^1 + C_u=\{w+v\mid w\in L^1, v\in C_u\}$.

Using the equality $(-\Delta)^\frac{\alpha}{2}\tilde{\varphi}_1=V\tilde{\varphi}_1$, where $\tilde{\varphi}_1(x)=|x|^{-d+\beta}$ (see e.g.\,\cite{KPS}), we have
\[
(-\Delta)^\frac{\alpha}{2}\varphi_1= V\tilde{\varphi}_1-I_{2-\alpha}\Delta(\varphi_1-\tilde{\varphi}_1)= V\tilde{\varphi}_1-I_{2-\alpha}\mathbf 1_{B^c(0,1)}\Delta(\varphi_1-\tilde{\varphi}_1).\quad B^c(0,1):=\mathbb R^d-B(0,1).
\]
Routine calculation shows that $-I_{2-\alpha}(\mathbf 1_{B^c(0,1)}\Delta(\varphi_1-\tilde{\varphi}_1)\geq -C_1$ for a constant $C_1$.

Since $\Lambda^\varepsilon\varphi_1=(-\Delta)^\frac{\alpha}{2}\varphi_1-V_\varepsilon\varphi_1$ and $V\tilde{\varphi}_1-V_\varepsilon\varphi_1\geq -V_\varepsilon(\varphi_1-\tilde{\varphi}_1)\geq -\delta c_\alpha^{-2}$, we obtain by scaling the bound
\[
J=\langle e^{-\frac{\Lambda^\varepsilon}{n}}|u|,\Lambda^\varepsilon \varphi\rangle \geq -(\delta c_\alpha^{-2} +C_1) s^{-1}\|e^{-\frac{\Lambda^\varepsilon}{n}}\|_{1\to 1}\|\phi_n^{-1}f\|_1, 
\]
or due to $\phi_n \geq \frac{1}{2}$,
\[
J\geq -2Cs^{-1}\|e^{-\frac{\Lambda^\varepsilon}{n}}\|_{1\to 1}\|f\|_1,  \quad C=C_1+\delta c_\alpha^{-2}.
\]

Noticing that $\|e^{-\frac{\Lambda^\varepsilon}{n}}\|_{1\to 1}\leq e^{\delta c_\alpha^{-2}\varepsilon^{-2} n^{-1}}=1+o(n)$ and taking $\lambda =3C s^{-1}$ we arrive at 
\[ 
 \Real\langle(\lambda+ Q)f,\frac{f}{|f|}\rangle\geq 0 \quad \quad f\in M. 
\] 
Clearly, the latter holds for all $f\in D(\tilde{Q})$. 
\end{proof}

The proof of \eqref{M3} is completed. We have verified all the assumptions of ($M_1$)-($M_4$) of Theorem A. The latter now yields the assertion of Theorem \ref{thm1}. 
\end{proof}

%

Having at hand Theorem \ref{thm1} and Corollary \ref{AHcorol1}, it is a simple matter to obtain the upper and lower bounds of the form 
\[
e^{-t\Lambda}(x,y)\approx e^{-t(-\Delta)^\frac{\alpha}{2}}(x,y)\varphi_t(x)\varphi_t(y). 
\]
Here $e^{-t(-\Delta)^\frac{\alpha}{2}}(x,y)\approx t^{-\frac{d}{\alpha}}\wedge\frac{t}{|x-y|^{d+\alpha}}$. ($a(z) \approx b(z)$ means that $c^{-1}b(z)\leq a(z) \leq cb(z)$ for some constant $c>1$ and all admissible $z$).

\medskip

\noindent\textbf{Proof of upper bound} $e^{-t\Lambda}(x,y)\leq C e^{-tA}(x,y)\varphi_t(x)\varphi_t(y)\quad (t>0,x,y\neq 0)$.  (For brevity here and below $(-\Delta)^\frac{\alpha}{2}=:A$.)

 By scaling, it suffices to consider $t=1$. Since $e^{-A}(x,y)\approx 1\wedge|x-y|^{-d-\alpha}$ $(x\neq y)$, Theorem \ref{thm1} yields, for $|x|, |y|\leq 2R$, 
\[
e^{-\Lambda^\varepsilon}(x,y)\leq C_R e^{-A}(x,y)\varphi(x)\varphi(y), \quad (\varphi\equiv\varphi_1) 
\]
By symmetry, it remains to prove this estimate for $|x|\leq |y|$, $|y|>2R$. First we note that for $|x|\leq |y|$, $|y|>2R$, $|z|\leq R$ and $0\leq\tau< 1$,
\[
e^{-(1-\tau)A}(z,y)\leq e^{-A}(x,y).
\]
Thus, by the Duhamel formula $e^{-\Lambda^\varepsilon}=e^{-A}+\int_0^1 e^{-\tau\Lambda^\varepsilon}V_\varepsilon e^{-(1-\tau)A} d\tau$,
\begin{align*}
e^{-\Lambda^\varepsilon}(x,y)&\leq e^{-A}(x,y)\bigg(1+\int_0^1 e^{-\tau \Lambda^\varepsilon} V_\varepsilon(x) d\tau\bigg)+\int_0^1 \langle e^{-\tau \Lambda^\varepsilon} (x,z)V_\varepsilon(z)\mathbf 1_{B^c(0,R)}(z)e^{-(1-\tau)A}(z,y)\rangle_z d\tau\\
&\leq e^{-A}(x,y)\bigg(1+\int_0^1 e^{-\tau \Lambda^\varepsilon} V_\varepsilon(x) d\tau\bigg)+V(R)\int_0^1 \langle e^{-\tau \Lambda^\varepsilon} (x,z)e^{-(1-\tau)A}(z,y)\rangle_z d\tau.
\end{align*}
Now fix $R$ by $\delta c_\alpha^{-2}R^{-\alpha}=\frac{1}{2}$. Then
\[
V(R)\int_0^1 \langle e^{-\tau \Lambda^\varepsilon} (x,z)e^{-(1-\tau)A}(z,y)\rangle_z d\tau\leq\frac{1}{2}\int_0^1 \langle e^{-\tau \Lambda^\varepsilon} (x,z)e^{-(1-\tau)\Lambda^\varepsilon}(z,y)\rangle_z d\tau=\frac{1}{2}e^{-\Lambda^\varepsilon}(x,y),
\]
and so
\[
\frac{1}{2}e^{-\Lambda^\varepsilon}(x,y)\leq e^{-A}(x,y)\bigg(1+\int_0^1 e^{-\tau \Lambda^\varepsilon}V_\varepsilon(x) d\tau\bigg).
\]
Next, by the Duhamel formula and Corollary \ref{AHcorol1},
\[
1+\int_0^1 e^{-\tau \Lambda^\varepsilon}V_\varepsilon(x) d\tau=\langle e^{-\Lambda^\varepsilon}(x,\cdot)\rangle\leq 2e^{\hat{c}}\varphi(x),
\]
and hence $e^{-\Lambda^\varepsilon}(x,y)\leq 4e^{\hat{c}}e^{-A}(x,y)\varphi(x)\leq 8e^{\hat{c}}e^{-A}(x,y)\varphi(x)\varphi(y)$. 

Finally, setting $C=C_R\vee (8e^{\hat{c}})$ and using $e^{-\Lambda^\varepsilon}|f|\uparrow e^{-\Lambda}|f|$ we end the proof of the upper bound.\hfill \qed
 
\medskip
\bigskip

\noindent\textbf{Proof of lower bound} $e^{-t\Lambda}(x,y)\geq C e^{-tA}(x,y)\varphi_t(x)\varphi_t(y)$ $\;\;(C>0$, $x,y\neq 0)$. 

\begin{proposition}
\label{prop_l}
Define $g=\varphi h$, $\varphi\equiv \varphi_s$, $0\leq h\in \mathcal S$-the L.Schwartz space of test functions. There is a constant $\hat{\mu}>0$ such that, for all  $0<t\leq s$,
\[
e^{-\frac{\hat{\mu}}{s}t}\langle g\rangle\leq \langle\varphi e^{-t\Lambda}\varphi^{-1} g\rangle.
\]
\end{proposition}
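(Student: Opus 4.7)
The plan is to reduce Proposition \ref{prop_l} to a pointwise lower bound on $e^{-t\Lambda}\varphi$, and then derive the latter from a pointwise upper bound on $\Lambda\varphi$ via a subsolution/Duhamel argument. Writing $g=\varphi h$ with $0\le h\in\mathcal S$, we have $\varphi^{-1}g=h\in L^1\cap L^2$. Since $e^{-t\Lambda}$ has a symmetric nonnegative integral kernel (by Theorem \ref{thm1}), Tonelli gives
\[
\langle\varphi e^{-t\Lambda}\varphi^{-1}g\rangle = \int\varphi(x)\bigl[e^{-t\Lambda}h\bigr](x)\,dx = \int h(y)\bigl[e^{-t\Lambda}\varphi\bigr](y)\,dy,
\]
so it suffices to prove the pointwise inequality $e^{-t\Lambda}\varphi(y)\ge e^{-\hat\mu t/s}\varphi(y)$ for a.e.\ $y\in\mathbb R^d$, $0<t\le s$.

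The central ingredient is the pointwise bound $\Lambda\varphi\le\hat\mu s^{-1}\varphi$. By the scaling $\varphi_s(x)=\varphi_1(s^{-1/\alpha}x)$ this reduces to showing $\Lambda\varphi_1\le\hat C$ for some constant; since $\varphi_1\ge\tfrac12$ this then upgrades to $\Lambda\varphi_1\le 2\hat C\varphi_1$, and scaling yields $\Lambda\varphi_s\le 2\hat C s^{-1}\varphi_s$. Using the decomposition from the proof of Proposition \ref{prop_main},
\[
\Lambda\varphi_1 = -V(\varphi_1-\tilde\varphi_1) - I_{2-\alpha}\mathbf{1}_{B^c(0,1)}\Delta(\varphi_1-\tilde\varphi_1),
\]
the first term vanishes on $B(0,1)$ (where $\varphi_1=\tilde\varphi_1$) and is bounded in absolute value on $B^c(0,1)$ by $2\delta c_\alpha^{-2}$, because $V\le\delta c_\alpha^{-2}$ and $|\varphi_1-\tilde\varphi_1|\le 2$ there. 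For the second, the same ``routine calculation'' that produced the lower bound $-C_1$ in the proof of Proposition \ref{prop_main} also delivers a matching upper bound: $\mathbf{1}_{B^c(0,1)}\Delta(\varphi_1-\tilde\varphi_1)$ is bounded on the annulus $\{1\le|x|\le 2\}$ and on $\{|x|\ge 2\}$ equals $-\Delta\tilde\varphi_1 = -(d-\beta)(2-\beta)|x|^{\beta-d-2}$, whose decay makes $I_{2-\alpha}$ applied to it a bounded continuous function on $\mathbb R^d$.

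From $\Lambda\varphi\le\hat\mu s^{-1}\varphi$ the semigroup lower bound follows by a standard comparison. Set $w(\tau,x)=e^{-\hat\mu\tau/s}\varphi(x)$; then
\[
(\partial_\tau+\Lambda)w = e^{-\hat\mu\tau/s}\bigl(\Lambda\varphi-\tfrac{\hat\mu}{s}\varphi\bigr)\le 0,
\]
so $\tfrac{d}{d\tau}\bigl[e^{-(t-\tau)\Lambda}w(\tau)\bigr] = e^{-(t-\tau)\Lambda}(\partial_\tau+\Lambda)w(\tau)\le 0$ by positivity preservation of $e^{-(t-\tau)\Lambda}$. Comparing $\tau=0$ and $\tau=t$ yields $e^{-t\Lambda}\varphi = e^{-t\Lambda}w(0)\ge w(t) = e^{-\hat\mu t/s}\varphi$; integrating against $h$ gives the claim.

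The main obstacle is rigor: $\varphi\notin L^2(\mathbb R^d)$ (due to the singularity at $0$ and the fact that $\varphi=\tfrac12$ at infinity), so $e^{-t\Lambda}\varphi$ only exists via the integral kernel, and the Duhamel/comparison step must be performed on the approximants $\Lambda^\varepsilon$ from the proof of Theorem \ref{thm1}, for which $e^{-t\Lambda^\varepsilon}\varphi$ is an honest bounded function by Corollary \ref{AHcorol1}. The passage $\varepsilon\downarrow 0$ is then standard: $V_\varepsilon\uparrow V$ forces $e^{-t\Lambda^\varepsilon}f\uparrow e^{-t\Lambda}f$ for $f\ge 0$, and monotone convergence preserves the inequality. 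Beyond the proof of Proposition \ref{prop_main}, the only genuinely new computation is the $L^\infty$ upper bound on the Riesz-potential term in $\Lambda\varphi_1$, which however is no harder than its already-established lower counterpart.
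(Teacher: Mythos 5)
The overall strategy (reduce to $e^{-t\Lambda}\varphi\geq e^{-\hat\mu t/s}\varphi$ via a pointwise generator bound $\Lambda\varphi\leq \hat\mu s^{-1}\varphi$ plus a semigroup comparison) is the natural one, and your claim that $\Lambda\varphi_1$ is bounded from above is plausible by essentially the same routine estimate that yields the lower bound $-C_1$. But there is a real gap at exactly the point you flag as ``standard'': to make the Duhamel/comparison step rigorous you switch to $\Lambda^\varepsilon$, and for $\Lambda^\varepsilon$ the key pointwise inequality \emph{fails}. Indeed,
\[
\Lambda^\varepsilon\varphi \;=\; \Lambda\varphi + (V-V_\varepsilon)\varphi,
\]
and $V-V_\varepsilon\geq 0$, so the correction goes the wrong way. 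Worse, near the origin $(V-V_\varepsilon)\varphi \sim \delta c_\alpha^{-2}|x|^{-\alpha}\cdot|x|^{-d+\beta}$ for each fixed $\varepsilon>0$, which is a strictly stronger singularity than $\varphi\sim|x|^{-d+\beta}$, so there is no constant $\mu$ (no matter how large) with $\Lambda^\varepsilon\varphi\leq\mu\varphi$. Hence $(\partial_\tau+\Lambda^\varepsilon)w\leq 0$ does not hold for your barrier $w(\tau)=e^{-\mu\tau}\varphi$, and the monotonicity of $\tau\mapsto e^{-(t-\tau)\Lambda^\varepsilon}w(\tau)$ breaks down. You cannot simply pass $\varepsilon\downarrow 0$ either, since $e^{-t\Lambda}\varphi$ has no direct semigroup meaning ($\varphi\notin L^1\cup L^2\cup L^\infty$ because of the origin singularity), which is precisely why the approximation is needed in the first place.

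The paper's proof handles this obstruction differently: it does not try to bound $\Lambda^\varepsilon\varphi$ pointwise by $\mu\varphi$. Instead it splits $\Lambda^\varepsilon\varphi = \mathbf 1_{B(0,1)}(V-V_\varepsilon)\varphi + v_\varepsilon$ with $\|v_\varepsilon\|_\infty\leq\mu_1/s$ uniformly in $\varepsilon$, inserts this into the Duhamel identity for $\langle g_n\rangle - \langle\phi_n e^{-t(\Lambda^\varepsilon-\mu)}h\rangle$, absorbs the $v_\varepsilon$ contribution into the $-\mu\int_0^t\langle\varphi,\cdot\rangle$ term (using $\varphi\geq 1/2$ and $\hat\mu=2\mu_1$), and then shows that the singular piece $\mathbf 1_{B(0,1)}(V-V_\varepsilon)\varphi$ contributes an error term that is controlled in $L^1$ via the bound from Proposition~\ref{prop_main} and vanishes as $\varepsilon\downarrow 0$ because $W_\varepsilon=\mathbf 1_{B(0,1)}(V-V_\varepsilon)\varphi^2\in L^1$ with $\|W_\varepsilon\|_1\to 0$. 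In short, the singular excess $(V-V_\varepsilon)\varphi$ must be treated as a small $L^1$ perturbation in the weighted Duhamel identity, not as a pointwise one, and your proposal is missing exactly this step.
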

\begin{proof} [Proof of Proposition \ref{prop_l}]
Set $g_n=\phi_n h$, $\phi_n(x)=e^{-\frac{\Lambda^\varepsilon}{n}}\varphi(x)$, $\varphi\equiv\varphi_s$. Let $\mu>0$ be a constant. Then $(\mu=\frac{\hat \mu}{s})$
\[
\langle g_n\rangle-\langle \phi_n e^{-t(\Lambda^\varepsilon-\mu)}h\rangle=-\mu\int_0^t\langle\varphi,e^{-\tau(\Lambda^\varepsilon-\mu)}e^{-\frac{\Lambda^\varepsilon}{n}}h\rangle d\tau + \int_0^t\langle\varphi,\Lambda^\varepsilon e^{-\tau(\Lambda^\varepsilon-\mu)}e^{-\frac{\Lambda^\varepsilon}{n}}h\rangle d\tau.
\]
Note that $\Lambda^\varepsilon\varphi=\Lambda^\varepsilon\tilde{\varphi}+\Lambda^\varepsilon(\varphi-\tilde{\varphi})=\mathbf 1_{B(0,1)}(V-V_\varepsilon)\varphi+v_\varepsilon$, where $\tilde{\varphi}(x)=(s^{-\frac{1}{\alpha}}|x|)^{-d+\beta}$.
 Routine calculation shows that $\|v_\varepsilon\|_\infty\leq\frac{\mu_1}{s}$, $\;\;\mu_1\neq\mu_1(\varepsilon)$. Thus
 \[
\int_0^t\langle v_\varepsilon, e^{-\tau(\Lambda^\varepsilon-\mu)}e^{-\frac{\Lambda^\varepsilon}{n}}h\rangle d\tau\leq \frac{\mu_1}{s}\int_0^t\langle e^{-\tau(\Lambda^\varepsilon-\mu)}e^{-\frac{\Lambda^\varepsilon}{n}}h\rangle d\tau\leq \frac{2\mu_1}{s}\int_0^t\langle\varphi, e^{-\tau(\Lambda^\varepsilon-\mu)}e^{-\frac{\Lambda^\varepsilon}{n}}h\rangle d\tau.
\]
Taking $\hat{\mu}=2\mu_1$, we have
 \[
\langle g_n\rangle-\langle \phi_n e^{-t(\Lambda^\varepsilon-\mu)}h\rangle\leq \int_0^t\langle \mathbf 1_{B(0,1)}(V-V_\varepsilon)\varphi,e^{-(\tau+\frac{1}{n})\Lambda^\varepsilon}h\rangle e^{\mu \tau} d\tau, \text{ or sending } n\to\infty,
\] 
\[
\langle g\rangle-e^{\frac{\hat{\mu}}{s}t}\langle \varphi e^{-t\Lambda^\varepsilon}h\rangle\leq e^{\hat{\mu}}\int_0^t\langle \mathbf 1_{B(0,1)}(V-V_\varepsilon)\varphi,e^{-\tau\Lambda^\varepsilon}h\rangle d\tau.
\]
Set $W_\varepsilon= \mathbf{1}_{B(0,1)}(V-V_\varepsilon)\varphi^2$ and $F^\tau_\varepsilon=\varphi e^{-\tau\Lambda^\varepsilon}\varphi^{-1}$. Note that $W_\varepsilon\in L^1$ due to $2(d-\beta)+\alpha<d$, and $\|F^\tau_\varepsilon f\|_1\leq e^{\frac{\hat{c}}{s}\tau}\|f\|_1$, $f\in L^1$ due to Proposition \ref{prop_main}. Therefore,
\[
\int_0^t\langle \mathbf 1_{B(0,1)}(V-V_\varepsilon)\varphi,e^{-\tau\Lambda^\varepsilon}h\rangle d\tau = \int_0^t\langle F^\tau_\varepsilon W_\varepsilon,\varphi^{-1}h\rangle\leq 2e^{\hat{c}}s\|W_\varepsilon\|_1\|h\|_\infty\rightarrow 0 \text{ as } \varepsilon\downarrow 0.
\]
\end{proof}

We also need the following consequence of the upper bound and Proposition \ref{prop_l}.   

\begin{corollary}
\label{ANcorol2} 
Fix $t>0$. Set $g:=\varphi h$, $\varphi=\varphi_t$, $0\leq h\in \mathcal S$ with $\sprt h\in B(0,R_0)$ for some $R_0<\infty$. Then there are $0<r_t<R_0\vee t^\frac{\alpha}{2}<R_{t,R_0}$ such that, for all $r\in [0,r_t]$ and $R\in [2R_{t,R_0},\infty[$,
\[
e^{-\hat{\mu}-1}\langle g\rangle\leq\langle \mathbf 1_{R,r}\varphi e^{-t\Lambda}\varphi^{-1}g\rangle, \quad\quad \mathbf 1_{R,r}:=\mathbf 1_{B(0,R)}-\mathbf 1_{B(0,r)}, \;\; \mathbf 1_{R,0}:=\mathbf 1_{B(0,R)}.
\]
In particular, $e^{-\hat{\mu}-1}\varphi_t(x)\leq e^{-t\Lambda}\varphi_t\mathbf 1_{R,r}(x)$ for every $x \in B(0,R_0)$.
\end{corollary}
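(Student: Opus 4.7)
The plan is to combine Proposition \ref{prop_l}, taken with $s=t$, with the just-established upper bound as tail control. Proposition \ref{prop_l} gives $e^{-\hat\mu}\langle g\rangle\leq\langle \varphi e^{-t\Lambda}\varphi^{-1}g\rangle$, so, writing $\mathbf 1=\mathbf 1_{B(0,r)}+\mathbf 1_{R,r}+\mathbf 1_{B^c(0,R)}$, it suffices to bound the two pieces
\[
I_{in}:=\langle\mathbf 1_{B(0,r)}\varphi e^{-t\Lambda}\varphi^{-1}g\rangle,\qquad I_{out}:=\langle\mathbf 1_{B^c(0,R)}\varphi e^{-t\Lambda}\varphi^{-1}g\rangle
\]
each by $\tfrac12 e^{-\hat\mu}(1-e^{-1})\langle g\rangle$. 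This will leave $\langle\mathbf 1_{R,r}\varphi e^{-t\Lambda}\varphi^{-1}g\rangle\geq e^{-\hat\mu-1}\langle g\rangle$, the main inequality.

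The pointwise form of the upper bound, $(\varphi e^{-t\Lambda}\varphi^{-1}g)(x)\leq C\varphi_t(x)^2(e^{-tA}g)(x)$, drives both tail estimates. For $I_{out}$ I would use $\varphi_t(x)\leq \tfrac12$ for $|x|\geq 2t^{1/\alpha}$ and the off-diagonal decay $e^{-tA}(x,y)\lesssim t|x-y|^{-d-\alpha}$; since $\sprt g\subset B(0,R_0)$, choosing $R\geq 2R_0$ gives $|x-y|\geq |x|/2$ and hence $I_{out}\lesssim tR^{-\alpha}\|g\|_1$, vanishing as $R\to\infty$ and fixing $R_{t,R_0}$. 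For $I_{in}$ I would use $\varphi_t(x)^2\lesssim t^{2(d-\beta)/\alpha}|x|^{-2(d-\beta)}$ near the origin together with the diagonal bound $(e^{-tA}g)(x)\lesssim t^{-d/\alpha}\|g\|_1$; the local integrability of $|x|^{-2(d-\beta)}$ (a consequence of $2(d-\beta)+\alpha<d$, already invoked in the proof of Proposition \ref{prop_l} for $W_\varepsilon\in L^1$) gives $I_{in}\lesssim t^{(d-2\beta)/\alpha}r^{2\beta-d}\|g\|_1$, vanishing as $r\downarrow 0$ and fixing $r_t$. Since $\|g\|_1=\langle g\rangle$, these constants are uniform in $h$ and depend only on $t$ and $R_0$, as required.

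For the pointwise assertion, selfadjointness of $e^{-t\Lambda}$ and the identity $g=\varphi h$ rewrite the inequality just proved as
\[
\langle e^{-t\Lambda}(\varphi\mathbf 1_{R,r}),h\rangle\geq e^{-\hat\mu-1}\langle\varphi,h\rangle \qquad\text{for every } 0\leq h\in\mathcal S,\ \sprt h\subset B(0,R_0).
\]
Fixing any $x_0\in B(0,R_0)\setminus\{0\}$ and letting $h$ run through a nonnegative approximate identity in $\mathcal S$ supported in $B(0,R_0)$ and concentrating at $x_0$, Lebesgue differentiation converts this into $e^{-t\Lambda}(\varphi\mathbf 1_{R,r})(x_0)\geq e^{-\hat\mu-1}\varphi(x_0)$; continuity of $x\mapsto e^{-t\Lambda}(\varphi\mathbf 1_{R,r})(x)$ on $\mathbb R^d\setminus\{0\}$ (since $\varphi\mathbf 1_{R,r}\in L^1\cap L^\infty$ once $r>0$, and the kernel of $e^{-t\Lambda}$ is dominated by that of $e^{-tA}$) extends it to every such $x_0$. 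The main obstacle is the inner tail $I_{in}$: it rests on the integrability threshold $\beta>(d+\alpha)/2$, which is only just enough to make the squared weight $\varphi_t^2$ locally integrable, and one must keep both tail estimates uniform in $h$ so that $r_t$ and $R_{t,R_0}$ depend solely on $t$ and $R_0$.
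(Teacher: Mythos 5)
Your proof follows essentially the same route as the paper's: start from Proposition~\ref{prop_l} with $s=t$, decompose $\mathbf 1 = \mathbf 1_{B(0,r)} + \mathbf 1_{R,r} + \mathbf 1_{B^c(0,R)}$, and kill the two tails by inserting the just-proved upper bound $e^{-t\Lambda}(x,y)\leq Ce^{-tA}(x,y)\varphi_t(x)\varphi_t(y)$ together with $\|e^{-tA}g\|_\infty\lesssim t^{-d/\alpha}\|g\|_1$ (inner tail, using local integrability of $\varphi_t^2$) and the off-diagonal decay of $e^{-tA}$ on $B^c(0,R)$ (outer tail). The only additions are cosmetic: you track the explicit rates $r^{2\beta-d}$ and $tR^{-\alpha}$ where the paper simply writes $o(r_t)$ and $o(R_{t,R_0})$, and you spell out the ``in particular'' pointwise statement via self-adjointness of $e^{-t\Lambda}$ and an approximate identity, a step the paper leaves implicit.
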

\begin{proof} [Proof of Corollary \ref{ANcorol2}]
By the upper bound,
\begin{align*}
\langle \mathbf 1_{B(0,r)}\varphi e^{-t\Lambda}\varphi^{-1} g\rangle & \leq C\langle \mathbf 1_{B(0,r)}\varphi^2,e^{-tA}g\rangle\\
&\leq CC_1t^{-\frac{d}{\alpha}}\|\mathbf 1_{B(0,r)}\varphi^2\|_1 \|g\|_1\\
& = o(r_t)\|g\|_1, \quad o(r_t)\rightarrow 0 \text{ as } r_t\downarrow 0;\\
\langle\mathbf 1_{B^c(0,R)}\varphi e^{-t\Lambda}\varphi^{-1} g\rangle & \leq C\langle\mathbf 1_{B^c(0,R)}\varphi^2, e^{-tA}g\rangle\\
& \leq C\langle e^{-tA}\mathbf 1_{B^c(0,R)},g\mathbf 1_{B(0,R_0)}\rangle, \text{ where } R\geq 2R_{t,R_0}\geq 2(R_0\vee t^\frac{\alpha}{2})\\
&\leq C\sup_{x\in B(0,R_0)} e^{-tA}\mathbf 1_{B^c(0,R)}(x)\|g\|_1\\
&\leq C\tilde{C}C_d R_{t,R_0}^{-\frac{\alpha}{2}}\|g\|_1\\
&=o(R_{t,R_0})\|g\|_1, \quad o(R_{t,R_0})\rightarrow 0 \text{ as } R_{t,R_0}\uparrow\infty
\end{align*}
due to $e^{-tA}(x,y)\leq \tilde{C}(t|x-y|^{-d-\alpha}\wedge t^{-\frac{d}{\alpha}})\leq \tilde{C}2^{d+\frac{\alpha}{2}}|y|^{-d-\frac{\alpha}{2}} \text{ if } |x|\leq R_0 \text{ and } |y|\geq R$.

We are left to apply Proposition \ref{prop_l}.
\end{proof}

Now we are in position to apply the so-called $3q$ argument. Set $q_t(x,\cdot)=e^{-t\Lambda}(x,\cdot)\varphi_t^{-1}(x)\varphi_t^{-1}(\cdot)$.

($a$) Let $x,y\in B^c(0,1)$, $x\neq y$. Clearly,
\[
q_3(x,y)\geq \varphi_3^{-1}(x)\varphi_3^{-1}(y)e^{-3\Lambda}(x,y)\geq e^{-3\Lambda}(x,y)\geq e^{-3A}(x,y).
\] 

($b$) Let $x,y\in B(0,1)$, $0< |x|\leq |y|$. By the reproduction property, since $e^{-t\Lambda}$ is positivity preserving,
\begin{align*}
q_3(x,y)&\geq \varphi_3^{-1}(x)\varphi_3^{-1}(y)\langle e^{-\Lambda}(x,\cdot)e^{-2\Lambda}(\cdot,y)\mathbf 1_{R,r}(\cdot)\rangle\\
&=\varphi_3^{-1}(x)\varphi_3^{-1}(y)\langle e^{-\Lambda}(x,\cdot)\varphi_1(\cdot)\varphi_1^{-1}(\cdot) e^{-2\Lambda}(\cdot,y)\mathbf 1_{R,r}(\cdot)\rangle \\
&\geq \varphi_3^{-1}(x)\varphi_3^{-1}(y)\langle e^{-\Lambda}(x,\cdot)\varphi_1(\cdot)\mathbf 1_{R,r}(\cdot)\rangle \inf_{r\leq |z|\leq R}\varphi_1^{-1}(z) e^{-2\Lambda}(z,y)\\
&\text{(here we are using Corollary \ref{ANcorol2})}\\
&\geq e^{-\hat{\mu}-1}\varphi_3^{-1}(x)\varphi_1(x)\varphi_1^{-1}(r)\varphi_3^{-1}(y)\inf_{r\leq |z|\leq R} e^{-2\Lambda}(z,y)\\
&=C_{r,R}\varphi_3^{-1}(y)\inf_{r\leq |z|\leq R} e^{-2\Lambda}(y,z);\\
e^{-2\Lambda}(y,z)&\geq \langle e^{-\Lambda}(y,\cdot)\varphi_1(\cdot)\varphi_1^{-1}(\cdot)e^{-\Lambda}(\cdot,z)\mathbf 1_{R,r}(\cdot)\rangle \\
&\text{(again we are using Corollary \ref{ANcorol2})}\\
&\geq e^{-\hat{\mu}-1}\varphi_1(y)\varphi_1^{-1}(r)\inf_{r\leq |z|,|\cdot|\leq R}e^{-\Lambda}(\cdot,z).\\
\end{align*}
Therefore
\[
q_3(x,y)\geq C_{r,R}^\prime \inf_{r\leq |z|,|\cdot|\leq R}e^{-A}(\cdot,z)\geq C_{r,R}^{\prime\prime}e^{-3A}(x,y).
\]

($c$) Let $x\in B(0,1)$, $x\neq 0$, $y\in B^c(0,1)$. Then 
\begin{align*}
q_3(x,y)&\geq \varphi_3^{-1}(x)\varphi_3^{-1}(y)\langle e^{-\Lambda}(x,\cdot)\varphi_1(\cdot) \varphi_1^{-1}(\cdot)e^{-2A}(\cdot,y)\mathbf 1_{R,r}(\cdot)\rangle\\
&\geq \varphi_1^{-1}(x)\langle e^{-\Lambda}(x,\cdot)\varphi_1(\cdot) \varphi_1^{-1}(\cdot)e^{-2A}(\cdot,y)\mathbf 1_{R,r}(\cdot)\rangle\\
&\geq e^{-\hat{\mu}-1}\inf_{r<|z|<R}\varphi_1^{-1}(z)e^{-2A}(z,y)\geq e^{-\hat{\mu}-1}\varphi_1^{-1}(r)\inf_{r<|z|<R}e^{-2A}(z,y)\\
&\geq C_{R,r} e^{-3A}(x,y).
\end{align*}

Finally, by ($a$),($b$),($c$), $q_3(x,y)\geq Ce^{-3A}(x,y)$ or $e^{-3\Lambda}(x,y)\geq Ce^{-3A}(x,y)\varphi_3(x)\varphi_3(y)$. The scaling argument ends the proof of the lower bound.\qed

\medskip
\noindent\textbf{3.~}The case $\delta=1$.
The following construction is standard: Define $\Lambda$ as the (minus) generator of a $C_0$ semigroup
$$
U^t:=s\mbox{-}L^2\mbox{-}\lim_{\varepsilon \downarrow 0}e^{-t\Lambda^{\varepsilon}}.
$$
(Indeed, set $u_\varepsilon(t)=e^{-t\Lambda^\varepsilon} f$, $f\in L^2_+.$ Since $\{V_\varepsilon\}$ is motonoically increasing as $\varepsilon \downarrow 0$, $u_\varepsilon \uparrow u$ to some $u$. Since $\|u_\varepsilon\|_2 \leq \|f\|_2$, we have
\[
u \in L^2_+, \quad\|u\|_2 \leq \|f\|_2, \quad \|u_\varepsilon \|_2 \uparrow \|u\|_2, \qquad  u_\varepsilon \overset{s}\rightarrow u=: U^t f, \quad U^{t+s}=U^tU^s, \quad U^tf\overset{s}\rightarrow f \text{ as } t\downarrow 0.
\]
For $f \in \Real L^2$ set $U^t f:=U^t f_+ - U^t f_-, \; f_\pm =0 \vee (\pm f)$. Then $\|U^t f\|_2 \leq \|U^t|f|\|_2 = \lim_\varepsilon \|e^{-t\Lambda^\varepsilon} |f|\|_2 = \|U^t |f|\|_2 \leq \|f\|_2.$)

By the fractional variant of the Brezis-Vasquez inequality \cite{BV}, see \cite{FLS}, 
$$
\langle \Lambda u, u \rangle=\|(-\Delta)^{\frac{\alpha}{4}}u\|_2^2-c_\alpha^{-2}\||x|^{-\frac{\alpha}{2}}u\|_2^2 \geq C_d\|u\|_{2j}^2,  \quad j \in [1,\frac{d}{d-\alpha}[,
$$  
Theorem A yields the on-diagonal bound $e^{-t\Lambda}(x,y) \leq Ct^{-j'}\varphi_t(x)\varphi_t(y)$ with $j'>\frac{d}{\alpha}$. 
In particular, $e^{-t\Lambda}(x,y) \leq C\varphi_1(x)\varphi_1(y)$.
Now, repeating the argument in the previous section, we obtain the upper bound
$$
e^{-\Lambda}(x,y) \leq e^{-(-\Delta)^{\frac{\alpha}{2}}}(x,y)\varphi_1(x)\varphi_1(y),
$$
and so, by scaling, 
$$
e^{-t\Lambda}(x,y) \leq e^{-t(-\Delta)^{\frac{\alpha}{2}}}(x,y)\varphi_t(x)\varphi_t(y), \quad t>0.
$$
The proof of the lower bound remains the same.

\begin{remark}
The observation that the scaling properties of $e^{-t(-\Delta)^{\frac{\alpha}{2}}}$, $0<\alpha<2$, 
allow to obtain the optimal upper bound even when the Sobolev embedding of $\Lambda$ is valid only for $j<\frac{d}{d-\alpha}$ is due to \cite{BGJP}. 
This is in sharp contrast to the case $\alpha=2$. Indeed, the scaling properties of $e^{t\Delta}$ are different, so one needs another argument (i.e.\,to pass to a space of higher dimension where one can appeal to the V.\,P.\,Il'in-Sobolev inequality \cite{MS}).
\end{remark}

\appendix

\section{Proof of Theorem A}

\label{A}

Set $L^2_\varphi =L^2(X,\varphi^2 d\mu)$, and define a unitary map $\Phi: L^2_\varphi \to L^2$  by $\Phi f=\varphi f$. Then the operator $\Lambda_\varphi = \Phi^{-1}\Lambda\Phi$ of domain $D(\Lambda_\varphi)=\Phi^{-1}D(\Lambda)$ is selfadjont on $L^2_\varphi$ and $\|e^{-t\Lambda_\varphi}\|_{2 \to 2, \varphi} =\|e^{-t\Lambda}\|_{2\to 2} \leq 1$ for all $t\geq 0$. Here and below the subscript $\varphi$ indicates that the corresponding quantities are related to the measure $\varphi^2 d \mu$.

Let $f=\varphi^{-1}h$, $h\in L^\infty_{com}$, and so $f\in L^2_\varphi\cap L^1_\varphi$ by $(M_2)$. Let $u_t=e^{-t \Lambda_\varphi}f$. Then $\varphi u_t = e^{-t\Lambda} \varphi f$ and
\begin{align*}
\langle \Lambda_\varphi u_t,u_t\rangle_\varphi & = \|\Lambda^\frac{1}{2}\varphi u_t\|_2^2 \geq c_S\|\varphi u_t\|_{2j}^2\\
& \geq c_S\|\varphi u_t\|_2^{2+\frac{2}{j^\prime}} \|\varphi u_t\|_1^{-\frac{2}{j^\prime}}\\
& = c_S\langle u_t,u_t \rangle_\varphi^{1+\frac{1}{j^\prime}} \|\varphi^{-1} \varphi e^{-t\Lambda} \varphi^{-1} \varphi^2f\|_1^{-\frac{2}{j^\prime}},
\end{align*}
where $(M_1)$ and H\"older's inequality have been used.

Clearly, $-\frac{1}{2}\frac{d}{d t}\langle u_t,u_t\rangle_\varphi = \langle \Lambda_\varphi u_t,u_t\rangle_\varphi$. Setting $w:=\langle u_t,u_t\rangle_\varphi$ and using $(M_4)$ we have
\[
\frac{d}{d t} w^{-\frac{1}{j^\prime}} \geq \frac{2}{j^\prime}c_S(c_0^{-1} \|\varphi e^{-t\Lambda} \varphi^{-1} \varphi^2 f\|_1)^{-\frac{2}{j^\prime}}.
\]
By our choice of $f$, $\varphi^2f =\varphi h \in \mathcal D$. Therefore we can apply $(M_3)$ and obtain
\[
\frac{d}{d t} w^{-\frac{1}{j^\prime}}\geq \frac{2}{j^\prime}c_S(c_1c_0^{-1} \|f\|_{1,\varphi})^{-\frac{2}{j^\prime}}, \;\; t\leq s.
\]
Integrating this inequality over $[0,t]$ gives
\[
\|e^{-t\Lambda_{\varphi_s}}f\|_{2,\varphi_s} \leq c t^{-\frac{j^\prime}{2}} \|f\|_{1,\varphi_s}, \;\; t\leq s. 
\]
Since $f\in \varphi^{-1} L^\infty_{com}$ and $\varphi^{-1} L^\infty_{com}$ is a dense subspace of $L^1_\varphi$, the last inequality yields
\[
\|e^{-t\Lambda_{\varphi_s}} \|_{1\to 2,\varphi_s} \leq c t^{-\frac{j^\prime}{2}}, \;\; t\leq s
\]
and \eqref{nie} follows. \hfill \qed

\end{document}